\let\dps\displaystyle
\newtheorem{theorem}{Theorem}
\newtheorem{proposition}{Proposition}
\newtheorem{definition}{Definition}
\title{Groups of order 8 and 16}
\date{}
\author{J. Lapuyade-Lahorgue}
\begin{document}
\maketitle
\section{Introduction}
This document is inspirated of the work of David Clausen (University of Puget Sound, USA) with simplification in the proofs. The reader has to have basic knowledge on group theory including:
\begin{itemize}
\item Abelian and cyclic groups.
\item Lagrange theorem.
\item Normal subgroup and quotient group.
\item Direct and semi-direct products.
\item Cauchy theorem.
\item Operation of groups on a set.
\end{itemize}
\section{Preliminaries}
\begin{definition}[Center of a group]
Let $G$ be a group, the center of $G$ is defined as:
\begin{equation}
Z(G)=\left\{h:\forall g,hg=gh\right\}\nonumber
\end{equation}
\end{definition} 
\begin{definition}[Commutator subgroup]
Let $G$ be a group, the commutator of $G$ is the smallest subgroup containing the commutators $[g_{1},g_{2}]=g_{1}g_{2}g_{1}^{-1}g_{2}^{-1}$
\end{definition}
\begin{proposition}
If the quotient group $G/Z(G)$ is cyclic then $G$ is Abelian and consequently $G/Z(G)\cong\left\{e\right\}$.
\end{proposition}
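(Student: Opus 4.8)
The plan is to exploit the defining property of a cyclic quotient: if $G/Z(G)$ is cyclic, then it is generated by a single coset, say $gZ(G)$ for some $g\in G$. I would first translate this into a statement about elements of $G$ itself. Since every element of $G/Z(G)$ has the form $(gZ(G))^{n}=g^{n}Z(G)$, any $a\in G$ lies in some coset $g^{n}Z(G)$, which means $a$ can be written as $a=g^{n}z$ with $z\in Z(G)$. This normal form for arbitrary elements of $G$ is the key structural fact, and establishing it is the first step.

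Next I would take two arbitrary elements $a,b\in G$ and write them as $a=g^{m}z_{1}$ and $b=g^{n}z_{2}$ with $z_{1},z_{2}\in Z(G)$. The goal is to check $ab=ba$ by direct computation, repeatedly using that central elements commute with everything. Concretely,
\begin{equation}
ab=g^{m}z_{1}g^{n}z_{2}=g^{m}g^{n}z_{1}z_{2}=g^{m+n}z_{1}z_{2},\nonumber
\end{equation}
where $z_{1}$ was moved past $g^{n}$ because $z_{1}\in Z(G)$. The analogous computation for $ba$ yields $g^{m+n}z_{2}z_{1}$, and since $z_{1},z_{2}$ are both central they commute with each other, giving $z_{1}z_{2}=z_{2}z_{1}$ and hence $ab=ba$. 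This shows $G$ is Abelian.

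For the final assertion I would observe that if $G$ is Abelian then every element commutes with every other, so $Z(G)=G$ by the definition of the center, whence $G/Z(G)=G/G\cong\{e\}$. I do not expect a genuine obstacle here; the only point requiring care is the bookkeeping in the commutation step, namely making sure each time a central element is slid past a power of $g$ (or past another central element) the justification is exactly the centrality hypothesis and nothing stronger. The rest is immediate once the normal form $a=g^{n}z$ is in hand.
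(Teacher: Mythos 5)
Your proposal is correct and follows essentially the same route as the paper: both write an arbitrary element in the normal form $g^{n}z$ with $z\in Z(G)$ (using that the cyclic quotient is generated by a single coset $gZ(G)$) and then verify $ab=ba$ by sliding central elements past powers of $g$. Your version is slightly more explicit about the final step $Z(G)=G\Rightarrow G/Z(G)\cong\{e\}$, which the paper leaves implicit, but the core argument is identical.
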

\begin{proof}
Suppose $G/Z(G)=\left\{Z(G),gZ(G),g^{2}Z(G),\ldots,g^{n-1}Z(G)\right\}$. Let $g^{i}x_{i}$ and $g^{j}x_{j}$, with $x_{i},x_{j}\in Z(G)$ be two elements of $G$. Then:
\begin{eqnarray}
g^{i}x_{i}g^{j}x_{j}&=&g^{i}g^{j}x_{i}x_{j}\nonumber\\
&=&x_{j}g^{i+j}x_{i}\nonumber\\
&=&x_{j}g^{j}g^{i}x_{i}\nonumber\\
&=&g^{j}x_{j}g^{i}x_{i}\nonumber
\end{eqnarray}
\end{proof}
\begin{proposition}
If $|G|=p^n$, where $p$ is a prime number, then $|Z(G)|=p^{k}$ for $k\geq 1$.
\end{proposition}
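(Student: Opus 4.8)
The plan is to use the conjugation action of $G$ on itself, which is precisely the operation of a group on a set listed among the prerequisites. First I would let $G$ act on itself by conjugation, $g\cdot x = gxg^{-1}$, and observe that the orbit of an element $x$ is its conjugacy class while its stabilizer is the centralizer $C_{G}(x)=\{g:gx=xg\}$. By the orbit--stabilizer theorem the size of the conjugacy class of $x$ equals the index $[G:C_{G}(x)]$, which by Lagrange's theorem divides $|G|=p^{n}$ and is therefore itself a power of $p$.

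The key observation is that $x\in Z(G)$ if and only if its conjugacy class is a singleton, equivalently $[G:C_{G}(x)]=1$. Partitioning $G$ into its conjugacy classes and separating the singleton classes (which together constitute exactly $Z(G)$) from the rest, I would obtain the class equation
\begin{equation}
|G|=|Z(G)|+\sum_{i}[G:C_{G}(x_{i})],\nonumber
\end{equation}
where the sum runs over representatives $x_{i}$ of the conjugacy classes of size strictly greater than one.

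To conclude, each index appearing in the sum is a power of $p$ strictly greater than $1$, hence divisible by $p$; and $|G|=p^{n}$ is also divisible by $p$. From the class equation it then follows that $p$ divides $|Z(G)|$. Since $Z(G)$ is a subgroup, Lagrange's theorem already forces $|Z(G)|=p^{k}$ for some $k\geq 0$, and the divisibility by $p$ upgrades this to $k\geq 1$, which is exactly the claim; note also that $Z(G)$ contains the identity, so the equation is never vacuous.

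The orbit--stabilizer computation and the appeal to Lagrange are routine. The one step deserving genuine care, and the real crux of the argument, is justifying that every \emph{non-central} conjugacy class has size divisible by $p$: this is what makes the entire sum vanish modulo $p$ and thereby pins $|Z(G)|$ down to a positive power of $p$. Everything hinges on the non-triviality of these indices, so I would be explicit that non-central $x$ has $C_{G}(x)\neq G$, whence $[G:C_{G}(x)]>1$.
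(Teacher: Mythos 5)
Your proof is correct and follows essentially the same route as the paper: the conjugation action of $G$ on itself, the class equation separating $Z(G)$ from the non-singleton conjugacy classes, the observation that each non-central index $[G:C_{G}(x)]$ is a power of $p$ greater than $1$, and a final appeal to Lagrange's theorem. Your write-up is if anything slightly more explicit than the paper's, particularly in justifying why the non-central terms are divisible by $p$.
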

\begin{proof}
$G$ operates on itself by conjugaison $g.h=ghg^{-1}$. For any $h$, the stabilizer of $h$ is the centralizer $S_{h}=\left\{g:gh=hg\right\}$ and its orbital is the set $\mathcal{O}(h)=\left\{ghg^{-1}:g\in G\right\}$. We recall that $\mathcal{O}(h)$ is in bijection with the quotient set $\left(G/S_{h}\right)_{l}=\left\{gS_{h}:g\in G\right\}$ and consequently has the same cardinal. It is easy to show that the cardinal $|\mathcal{O}(h)|=1$ if and only if $h\in Z(G)$, consequently:
\begin{equation}
|G|=|Z(G)|+\sum_{h\notin Z(G)}\frac{|G|}{|S_{h}|},\nonumber
\end{equation}
where the sum is indexed for the $h$ which are not in the same orbital.\\
The term $\dps{\sum_{h\notin Z(G)}\frac{|G|}{|S_{h}|}}$ is a multiple of $p$, consequently $|Z(G)|$ also. Using the Lagrange's theorem, we deduce the result.
\end{proof}
\begin{theorem}[Burnside's theorem]
If the order of a group $G$ is equal to $p^2$, with $p$ a prime number; then $G$ is abelian.
\end{theorem}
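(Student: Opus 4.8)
The plan is to combine the two preceding propositions, reducing the problem to a short case analysis on the size of the center. Since $|G|=p^2$ is a prime power, Proposition 2 applies directly and tells me that $|Z(G)|=p^k$ with $k\geq 1$. Because $Z(G)$ is a subgroup of $G$, Lagrange's theorem forces $|Z(G)|$ to divide $p^2$, so the only possibilities are $|Z(G)|=p$ or $|Z(G)|=p^2$.

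First I would dispose of the case $|Z(G)|=p^2$. Here $Z(G)$ is a subgroup of $G$ having the same cardinality as $G$, hence $Z(G)=G$, which is precisely the assertion that every element commutes with every other; so $G$ is abelian.

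The remaining case is $|Z(G)|=p$. Then the quotient group $G/Z(G)$ has order $|G|/|Z(G)|=p^2/p=p$. Any group of prime order is cyclic, since it is generated by any non-identity element, whose order divides $p$ by Lagrange and therefore equals $p$. Thus $G/Z(G)$ is cyclic, and Proposition 1 applies to yield that $G$ is abelian, completing the case analysis.

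I do not expect a genuine obstacle here, as the heavy lifting is already carried out by the two propositions; the argument is essentially a bookkeeping of the divisors of $p^2$. The only subtlety worth flagging is that the case $|Z(G)|=p$ turns out to be vacuous: once Proposition 1 forces $G$ to be abelian, one necessarily has $Z(G)=G$ and hence $|Z(G)|=p^2$, contradicting the assumption $|Z(G)|=p$. This does not weaken the proof, since $G$ is abelian in every surviving case, but it explains why the conclusion can be sharpened to $Z(G)=G$.
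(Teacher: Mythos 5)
Your proof is correct and is exactly the argument the paper intends: the paper's proof is literally ``use the two previous propositions,'' and you have filled in precisely that case analysis on $|Z(G)|\in\{p,p^2\}$, invoking Proposition 2 for nontriviality of the center and Proposition 1 for the cyclic-quotient case. Nothing further is needed.
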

\begin{proof}
Easy: use the two previous propositions.
\end{proof}
\begin{proposition}
The commutator subgroup, denoted $D(G)$, is the smallest normal subgroup such that $\dps{G/D(G)}$ is Abelian.
\end{proposition}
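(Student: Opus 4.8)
The plan is to establish three things: that $D(G)$ is normal in $G$, that $G/D(G)$ is Abelian, and that $D(G)$ is contained in every normal subgroup $N$ for which $G/N$ is Abelian. Taken together these are exactly the assertion that $D(G)$ is the smallest normal subgroup with Abelian quotient, so the proof naturally splits into these parts.

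First I would prove normality. Since $D(G)$ is by definition generated by the commutators $[a,b]$, it suffices to check that conjugation by an arbitrary $g\in G$ sends each commutator back into $D(G)$. The key computation is the identity $g[a,b]g^{-1}=[gag^{-1},gbg^{-1}]$, obtained by inserting factors $g^{-1}g$ between the terms of $g(aba^{-1}b^{-1})g^{-1}$; thus a conjugate of a commutator is again a commutator. Consequently conjugation, being an automorphism of $G$, permutes the generating set of $D(G)$ and therefore maps $D(G)$ into itself, so $D(G)\trianglelefteq G$.

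Next, to see that the quotient is Abelian, I would argue at the level of cosets. For any $a,b\in G$ the commutator $[a,b]$ lies in $D(G)$, so its class in $G/D(G)$ is trivial, which gives $(aD(G))(bD(G))=(bD(G))(aD(G))$; as $a,b$ were arbitrary, $G/D(G)$ is Abelian. For minimality, let $N\trianglelefteq G$ with $G/N$ Abelian. Commutativity in the quotient means $\overline{a}\,\overline{b}=\overline{b}\,\overline{a}$ for all classes, hence $\overline{[a,b]}$ equals the identity of $G/N$, i.e. every commutator $[a,b]$ lies in $N$. Since $N$ is then a subgroup containing all commutators and $D(G)$ is the smallest such subgroup, we conclude $D(G)\subseteq N$.

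I expect no serious obstacle: the argument is essentially a dictionary between the statement ``the classes commute in the quotient'' and the statement ``the commutators lie in the subgroup.'' The only step demanding a small explicit verification is the conjugation identity underlying normality, and even that reduces to inserting $g^{-1}g$ pairs and reading off a commutator.
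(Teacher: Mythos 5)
Your proposal is correct, and its overall skeleton coincides with the paper's: both split the statement into normality of $D(G)$, commutativity of $G/D(G)$, and minimality among normal subgroups with Abelian quotient, and your arguments for the second and third parts are exactly the paper's (commutators become trivial in the quotient, and conversely an Abelian quotient forces all commutators into the kernel). The one genuine difference is the identity used for normality. The paper inserts factors so as to write the conjugate of a commutator as a \emph{product of two commutators},
\begin{equation}
h[g_{1},g_{2}]h^{-1}=[h,g_{1}][g_{1},hg_{2}],\nonumber
\end{equation}
whereas you use the cleaner and more standard fact that a conjugate of a commutator is again a single commutator,
\begin{equation}
g[a,b]g^{-1}=[gag^{-1},gbg^{-1}].\nonumber
\end{equation}
Your identity buys slightly more: it shows conjugation permutes the generating set of $D(G)$ itself, so normality follows from the general principle that an automorphism preserving a generating set preserves the generated subgroup; it also adapts verbatim to show $D(G)$ is fully invariant (stable under every endomorphism), not merely normal. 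The paper's computation is self-contained but less transparent, since it lands in $D(G)$ only as a product of two generators rather than as a generator. Both are one-line verifications, and both complete the proof without any gap.
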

\begin{proof}
\textbf{First step:} $D(G)$ is a normal subgroup:
\begin{eqnarray}
h[g_{1},g_{2}]h^{-1}&=&hg_{1}g_{2}g_{1}^{-1}g_{2}^{-1}h^{-1}\nonumber\\
&=&hg_{1}g_{2}g_{1}^{-1}(hg_{2})^{-1}\nonumber\\
&=&hg_{1}h^{-1}(hg_{2})g_{1}^{-1}(hg_{2})^{-1}\nonumber\\
&=&hg_{1}h^{-1}g_{1}^{-1}g_{1}(hg_{2})g_{1}^{-1}(hg_{2})^{-1}\nonumber\\
&=&[h,g_{1}][g_{1},hg_{2}]\in D(G)\nonumber
\end{eqnarray}
\textbf{Second step:} $\dps{G/D(G)}$ is Abelian:\\
It is trivial by construction of $D(G)$. Indeed, $g_{1}^{-1}g_{2}^{-1}g_{1}g_{2}\in D(G)$, consequently, $g_{1}g_{2}D(G)=g_{2}g_{1}D(G)$.
\textbf{Third step:} If $H$ is a normal subgroup such that $\dps{G/H}$ is Abelian, then $H$ contains the commutator. The conclusion is trivial.
\end{proof}
\begin{theorem}[Correspondance theorem]
Let $H$ be a normal subgroup of a group $G$, then there is a bijection between the set of the subgroups $S$ of $G$ containing $H$ and the set of the subgroups $S/H$ of $G/H$.
\end{theorem}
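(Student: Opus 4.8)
The plan is to use the canonical surjection $\pi : G \to G/H$ defined by $\pi(g) = gH$, which is a group homomorphism precisely because $H$ is normal, and whose kernel is $H$. I would introduce the candidate bijection $\Phi$ sending a subgroup $S$ with $H \subseteq S \subseteq G$ to its image $\Phi(S) = \pi(S) = S/H$, together with its candidate inverse $\Psi$ sending a subgroup $T \leq G/H$ to its preimage $\Psi(T) = \pi^{-1}(T)$. The work of the proof is to check that both maps land where claimed and that they are mutually inverse.

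First I would verify well-definedness. The image $\pi(S)$ of a subgroup under a homomorphism is again a subgroup of $G/H$, so $\Phi$ makes sense. Conversely, $\pi^{-1}(T)$ is a subgroup of $G$ by the usual closure, inverse, and identity checks pulled back through $\pi$; moreover it contains $H = \pi^{-1}(\{H\})$, since the trivial coset $H$ is the identity of $G/H$ and hence lies in every subgroup $T$. Thus $\Psi$ indeed produces a subgroup of $G$ containing $H$.

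Next I would establish the two composites. The identity $\Phi(\Psi(T)) = \pi(\pi^{-1}(T)) = T$ is immediate from the surjectivity of $\pi$. The identity $\Psi(\Phi(S)) = \pi^{-1}(\pi(S)) = S$ is the heart of the matter and the step I expect to be the main obstacle, because in general one only gets the inclusion $S \subseteq \pi^{-1}(\pi(S))$ for free. The reverse inclusion is exactly where the hypothesis $H \subseteq S$ is used: if $g \in \pi^{-1}(\pi(S))$, then $gH = sH$ for some $s \in S$, so $g = sh$ with $h \in H \subseteq S$, and therefore $g \in S$.

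Having shown that both composites equal the identity, I would conclude that $\Phi$ is a bijection with inverse $\Psi$, which is precisely the asserted correspondence. As a remark one could add that $\Phi$ and $\Psi$ both preserve inclusions, so the bijection is in fact an isomorphism of the two subgroup lattices, but the statement only requires the bijection itself.
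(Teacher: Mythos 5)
Your proposal is correct. Note, however, that the paper does not prove this theorem at all: it is stated as a standard prerequisite (alongside Lagrange's theorem, quotient groups, etc.) and then invoked repeatedly in the classification, so there is no proof in the paper to compare yours against. Your argument via the canonical projection $\pi\colon G\to G/H$, with $\Phi(S)=\pi(S)$ and $\Psi(T)=\pi^{-1}(T)$, is the standard one, and you correctly isolate the only delicate point: the inclusion $\pi^{-1}(\pi(S))\subseteq S$, which fails for an arbitrary subgroup $S$ but holds here precisely because $H\subseteq S$ (if $gH=sH$ with $s\in S$, then $g=sh$ for some $h\in H\subseteq S$, so $g\in S$). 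The remaining verifications --- that images and preimages of subgroups under a homomorphism are subgroups, that $\pi^{-1}(T)$ contains $H=\ker\pi$, and that $\pi(\pi^{-1}(T))=T$ by surjectivity of $\pi$ --- are routine and you handle them correctly, so the two composites are the identity and the correspondence is a bijection as claimed.
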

\begin{proposition}
Let $H$ and $K$ be two subgroups of a group $G$. We define the set $HK=\left\{hk:h\in H,k\in K\right\}$, which is not necessarily a group. Then:
\begin{equation}
|HK|=\frac{|H|\times |K|}{|H\cap K|}\nonumber
\end{equation}
\end{proposition}
\section{Groups of order 8}
We now classify all groups of order 8. The neutral element will be denoted $e$.
\subsection{Abelian groups of order 8}
\begin{center}
\begin{tabular}{|c|c|}
\hline
\textbf{Name} & \textbf{Character presentation}\\
\hline
$\mathbb{Z}_{8}$ & $a^{8}=e$\\
\hline
$\mathbb{Z}_{4}\times \mathbb{Z}_{2}$ & $a^{4}=b^{2}=e$ and $D(G)=\left\{e\right\}$\\
\hline
$(\mathbb{Z}_{2})^{3}$ & $a^{2}=b^{2}=c^{2}=e$ and $D(G)=\left\{e\right\}$.\\
\hline
\end{tabular}
\end{center}
\subsection{No Abelian groups of order 8}
\begin{proposition}
We have necessarily:
\begin{equation}
Z(G)\cong\mathbb{Z}_{2}\nonumber
\end{equation}
and:
\begin{equation}
G/Z(G)\cong\mathbb{Z}_{2}\times \mathbb{Z}_{2}\nonumber
\end{equation}
\end{proposition}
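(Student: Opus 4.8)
The plan is to pin down the order of $Z(G)$ first, and then read off the isomorphism type of $G/Z(G)$ from its order combined with the cyclic-quotient criterion of the first proposition. Everything reduces to the two preliminary propositions on $p$-groups together with Burnside's theorem, so there is essentially no computation to carry out.

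First I would invoke the proposition stating that for $|G|=p^n$ one has $|Z(G)|=p^k$ with $k\geq 1$. Since $|G|=2^3$, this leaves only the possibilities $|Z(G)|\in\{2,4,8\}$. The case $|Z(G)|=8$ is excluded immediately, because $Z(G)=G$ means $G$ is Abelian, contrary to hypothesis. The case $|Z(G)|=4$ is excluded because then $|G/Z(G)|=2$, so $G/Z(G)$ is cyclic, and the first proposition forces $G$ to be Abelian — again a contradiction. Hence $|Z(G)|=2$, and as the only group of order $2$ is $\mathbb{Z}_2$, we conclude $Z(G)\cong\mathbb{Z}_2$.

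For the quotient, I would compute $|G/Z(G)|=8/2=4=p^2$, so Burnside's theorem guarantees that $G/Z(G)$ is Abelian. The only Abelian groups of order $4$ are $\mathbb{Z}_4$ and $\mathbb{Z}_2\times\mathbb{Z}_2$. Since $\mathbb{Z}_4$ is cyclic, the first proposition again forbids it as a candidate for $G/Z(G)$ (otherwise $G$ would be Abelian). Therefore $G/Z(G)\cong\mathbb{Z}_2\times\mathbb{Z}_2$, which completes the argument.

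The subtle point to watch — rather than a genuine obstacle — is that the proposition "$G/Z(G)$ cyclic $\Rightarrow$ $G$ Abelian" does all the real work, and it is used \emph{twice}: once to eliminate $|Z(G)|=4$ and once to eliminate the $\mathbb{Z}_4$ quotient. The only facts to double-check are the elementary classifications of very small groups (a unique group of order $2$, and exactly the two listed Abelian groups of order $4$), both of which are standard.
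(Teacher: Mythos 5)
Your proof is correct and follows essentially the same route as the paper: both arguments combine the fact that $|Z(G)|$ is a nontrivial power of $2$ (necessarily $2$ or $4$, since $G$ is non-Abelian) with the proposition that $G/Z(G)$ cyclic forces $G$ Abelian, which rules out $|Z(G)|=4$ and rules out $G/Z(G)\cong\mathbb{Z}_4$. The paper states this in one compressed sentence; your version merely spells out the case analysis (and makes explicit, via Burnside's theorem, that the non-cyclic possibility of order $4$ is $\mathbb{Z}_2\times\mathbb{Z}_2$), so there is no substantive difference.
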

\begin{proof}
As $G/Z(G)$ cannot be cyclic and $|Z(G)|$ is a non trivial power of $2$ different of $8$, we deduce the result.
\end{proof}
From the previous proposition, as $G$ is not Abelian and $G/Z(G)$ is Abelian; we deduce that $D(G)=Z(G)$.\\
Moreover, we remark that:
\begin{equation}
\mathbb{Z}_{2}\times \mathbb{Z}_{2}=\left\{(0,0),(1,0),(0,1),(1,1)\right\},\nonumber
\end{equation}
has three subgroups of order $2$. Consequently, by the correspondance theorem, $G$ has three subgroups of order $4$, denoted $G_{1}$, $G_{2}$, $G_{3}$, containing the center.\\
\begin{proposition}
If $g_{1}\in G_{1}\backslash Z(G)$ and $g_{2}\in G_{2}\backslash Z(G)$, then $g_{1}g_{2}\in G_{3}\backslash Z(G)$.
\end{proposition}
\begin{proof}
Suppose, by the correspondance theorem, that $G_{1}$ (resp. $G_{2}$) corresponds to the subgroup $\left\{(0,0),(1,0)\right\}$ (resp. $\left\{(0,0),(0,1)\right\}$). Then, $g_{1}$ corresponds to $(1,0)$ and $g_{2}$ to $(0,1)$. As $(1,0)+(0,1)=(1,1)$ which corresponds to $g_{1}g_{2}$; we deduce the conclusion. 
\end{proof}
\begin{proposition}
If $g_{1}\in G_{1}\backslash Z(G)$ and $g_{2}\in G_{2}\backslash Z(G)$, then $g_{1}g_{2}\neq g_{2}g_{1}$.
\end{proposition}
\begin{proof}
Suppose the converse. Then $g_{1}$ commutes with the elements of $Z(G)$, of $g_{1}Z(G)$ and of $g_{2}Z(G)$. Consequently, the centralizer of $g_{1}$ has at least $6$ elements. And, as the centralizer is a subgroup of $G$ then it has exactly $8$ elements. We deduce that the centralizer of $g_{1}$ is $G$, so $g_{1}\in Z(G)$; which is contradictory.
\end{proof}
\begin{proposition}
Let $G_{i},G_{j}$ be two distinct subgroups amongst $G_{1},G_{2},G_{3}$, then:
\begin{eqnarray}
G&=&G_{i}G_{j}\nonumber\\
G_{i}\cap G_{j}&=&Z(G)\nonumber
\end{eqnarray}
\end{proposition}
\begin{proof}
Use the formula:
\begin{equation}
|G_{i}G_{j}|=\frac{|G_{i}|\times |G_{j}|}{|G_{i}\cap G_{j}|}=\frac{16}{|G_{i}\cap G_{j}|},\nonumber
\end{equation}
and the fact that $G_{i}G_{j}$ cannot have more elements than $G$.
\end{proof}
As the sub-groups $G_{i}$ has order equal to $4$, they are either isomorphic to $\mathbb{Z}_{4}$ or $\mathbb{Z}_{2}\times\mathbb{Z}_{2}$.

In the following, let $Z(G)=\left\{e,\alpha\right\}$ with $\alpha^2=e$.
\subsubsection{$G_{1},G_{2},G_{3}\cong\mathbb{Z}_{2}\times\mathbb{Z}_{2}$}
In this case, there are three elements $a,b,c$ in $G\backslash Z(G)$ such that:
\begin{itemize}
\item $G_{1}=\left\{e,a,\alpha,a\alpha\right\}$.
\item $G_{2}=\left\{e,b,\alpha,b\alpha\right\}$.
\item $G_{3}=\left\{e,c,\alpha,c\alpha\right\}$.
\end{itemize}
From the Proposition 6., $ba\in G_{3}$ and consequently has order $2$; so $baba=e$. But, as $a$ and $b$ have order $2$, then $baab=e$. Consequently, $ba=ab$, which contradicts Proposition 7.
\subsubsection{$G_{1}\cong\mathbb{Z}_{4},G_{2},G_{3}\cong\mathbb{Z}_{2}\times\mathbb{Z}_{2}$}
There exists three elements $a,b,c$ in $G\backslash Z(G)$ such that:
\begin{itemize}
\item $G_{1}=\left\{e,a,a^2=\alpha,a^3\right\}$.
\item $G_{2}=\left\{e,b,\alpha,b\alpha\right\}$.
\item $G_{3}=\left\{e,c,\alpha,c\alpha\right\}$.
\end{itemize}
$ba$ has order $2$, so $baba=e$. $a$ has order $4$ and $b$ has order $2$, so $baa^3b=e$. Consequently, $ba=a^3b$. As $G=G_{1}G_{2}$, $G$ is generated by $a$ and $b$. Moreover, $|<a>|\times |<b>|=4\times 2=8=|G|$ and $<a>\cap <b>=\left\{e\right\}$; $G$ is the semi-direct product $<a>\rtimes <b>\cong \mathbb{Z}_{4}\rtimes \mathbb{Z}_{2}$. It is the diedral group $D_{4}$. Its character representation is $a^{4}=b^{2}=e$ and $ba=a^{3}b$.
\subsubsection{$G_{1},G_{2}\cong\mathbb{Z}_{4},G_{3}\cong\mathbb{Z}_{2}\times\mathbb{Z}_{2}$}
There exists three elements $a,b,c$ in $G\backslash Z(G)$ such that:
\begin{itemize}
\item $G_{1}=\left\{e,a,a^2=\alpha,a^3\right\}$.
\item $G_{2}=\left\{e,b,b^2=\alpha,b^3\right\}$.
\item $G_{3}=\left\{e,c,\alpha,c\alpha\right\}$.
\end{itemize}
We have also $baba=e$ and $baa^3b^3=e$. Consequently $ba=a^3b^3=a^2abb^2=\alpha ab\alpha=\alpha^2 ab=ab$ because $\alpha\in Z(G)$ and has order $2$. $ba=ab$ leads to a contradiction.
\subsubsection{$G_{1},G_{2},G_{3}\cong\mathbb{Z}_{4}$}
There exists three elements $a,b,c$ in $G\backslash Z(G)$ such that:
\begin{itemize}
\item $G_{1}=\left\{e,a,a^2=\alpha,a^3\right\}$.
\item $G_{2}=\left\{e,b,b^2=\alpha,b^3\right\}$.
\item $G_{3}=\left\{e,c,c^{2}=\alpha,c^3\right\}$.
\end{itemize}
We have $ba(ba)^3=e$ and $baa^3b^3=e$. Consequently $(ba)^3=a^3b^3=ab$; so $(ba)^2ba=ab$; so $\alpha ba=ab$; so $ba=\alpha ab=a^3b$. It is the group of quaternions denoted $\mathbb{H}$. It is not a semi-direct product and its character representation is $a^{4}=b^{4}=e$, $a^{2}=b^{2}$ and $ba=a^{3}b$. It is frequent that $a$ and $b$ are denoted $i$ and $j$, $ab$ is denoted $k$ and $a^2$ is denoted $-1$. With these notation $\mathbb{H}=\left\{1,-1,i,-i,j,-j,k,-k\right\}$ with $i^{2}=j^{2}=-1$, $(-1)^2=1$, $ji=-ij$, $ij=k$.

We have classified all groups of order $8$. The following table gives the non-Abelian groups of order $8$.
\begin{center}
\begin{tabular}{|c|c|}
\hline
\textbf{Name} & \textbf{Character presentation}\\
\hline
$D_{4}\cong\mathbb{Z}_{4}\rtimes\mathbb{Z}_{2}$ & $a^{4}=b^2=e$,$ba=a^3b$\\
\hline
$\mathbb{H}$ & $a^{4}=b^{4}=e$, $a^2=b^2$, $ba=a^3b$\\
\hline
\end{tabular}
\end{center}
\section{Groups of order $16$}
\subsection{Abelian groups of order $16$}
\begin{center}
\begin{tabular}{|c|c|}
\hline
\textbf{Name} & \textbf{Character presentation}\\
\hline
$\mathbb{Z}_{16}$ & $a^{16}=e$\\
\hline
$\mathbb{Z}_{8}\times\mathbb{Z}_{2}$ & $a^{8}=b^{2}=e$, $D(G)=\left\{e\right\}$\\
\hline
$\mathbb{Z}_{4}\times\mathbb{Z}_{4}$ & $a^4=b^4=e$, $D(G)=\left\{e\right\}$\\
\hline
$\mathbb{Z}_{4}\times(\mathbb{Z}_{2})^2$ & $a^4=b^2=c^2=e$, $D(G)=\left\{e\right\}$\\
\hline
$(\mathbb{Z}_{2})^4$ & $a^2=b^2=c^2=d^2=e$, $D(G)=\left\{e\right\}$\\
\hline
\end{tabular}
\end{center}
\subsection{Non-Abelian groups of order 16}
As $\dps{G/Z(G)}$ cannot be cyclic, we show easily that $|Z(G)|\in\left\{2,4\right\}$.
\subsubsection{First case: $|Z(G)|=4$}
In this case, as  $\dps{G/Z(G)}$ is not cyclic, $\dps{G/Z(G)\cong \mathbb{Z}_{2}\times \mathbb{Z}_{2}}$. As $\dps{G/Z(G)}$ is Abelian, then $D(G)\vartriangleleft Z(G)$.\\
As it has been done for the groups of order $8$, we show that $G$ has three sub-groups $G_{1},G_{2},G_{3}$ of order $8$ which contain the center.
\begin{proposition}
The subgroups $G_{1},G_{2},G_{3}$ are Abelian.
\end{proposition}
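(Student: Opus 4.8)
The plan is to apply the very first proposition of the paper --- that if $G_i/Z(G_i)$ is cyclic then $G_i$ is Abelian --- to each of the three subgroups $G_1,G_2,G_3$. The whole argument hinges on locating $Z(G_i)$ precisely enough to force the quotient $G_i/Z(G_i)$ to be cyclic.

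First I would record the two facts that make this work. Since each $G_i$ contains $Z(G)$ and $|G_i|=8$ while $|Z(G)|=4$, the index $[G_i:Z(G)]=2$. Second, and this is the key containment, $Z(G)\subseteq Z(G_i)$: indeed, every $h\in Z(G)$ commutes with all of $G$, hence in particular with every element of the subgroup $G_i$, and since $h\in Z(G)\subseteq G_i$ it is an element of $G_i$ commuting with all of $G_i$, i.e. $h\in Z(G_i)$.

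Combining these, $Z(G_i)$ is a subgroup of $G_i$ sandwiched as $Z(G)\subseteq Z(G_i)\subseteq G_i$. Because $[G_i:Z(G)]=2$, the index $[G_i:Z(G_i)]$ divides $2$, so $G_i/Z(G_i)$ has order $1$ or $2$ and is therefore cyclic. Applying the first proposition to $G_i$ then yields that $G_i$ is Abelian, and this holds for $i=1,2,3$.

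I do not expect a serious obstacle here: the content is the observation $Z(G)\subseteq Z(G_i)$ together with the index bookkeeping, after which the cited proposition finishes the proof immediately. The only point to state carefully is that the containment $Z(G)\subseteq Z(G_i)$ relies on $Z(G)$ actually sitting inside $G_i$ (so that its elements are candidates for $Z(G_i)$), which is exactly how $G_1,G_2,G_3$ were constructed via the correspondence theorem.
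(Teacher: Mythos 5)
Your proof is correct and follows essentially the same route as the paper: both arguments hinge on the containment $Z(G)\subseteq Z(G_i)$, which forces $|Z(G_i)|\geq 4$, and both then conclude via the paper's first proposition that a group with cyclic central quotient is Abelian (the paper invokes it indirectly, through the fact that the center of a group of order $8$ has order $2$ or $8$; you apply it directly to $G_i/Z(G_i)$, which has order dividing $2$). The two proofs are the same in substance, differing only in whether that last step is cited as a known fact about groups of order $8$ or re-derived inline.
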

\begin{proof}
Indeed, as $Z(G)<G_{i}$, we deduce easily that $Z(G)<Z(G_{i})$. Consequently, $Z(G_i)$ has at least $4$ elements. And as the center of a group of order $8$ has either $2$ elements or $8$ elements, then $|Z(G_i)|=8$ and we deduce that $G_i$ is Abelian.
\end{proof}
The following propositions can be proved in a similar way as for the groups of order $8$.
\begin{proposition}
If $g_{1}\in G_{1}\backslash Z(G)$ and $g_{2}\in G_{2}\backslash Z(G)$, then $g_{1}g_{2}\in G_{3}\backslash Z(G)$.
\end{proposition}
\begin{proposition}
If $g_{1}\in G_{1}\backslash Z(G)$ and $g_{2}\in G_{2}\backslash Z(G)$, then $g_{1}g_{2}\neq g_{2}g_{1}$.
\end{proposition}
\begin{proposition}
Let $G_{i},G_{j}$ be two distinct subgroups amongst $G_{1},G_{2},G_{3}$, then:
\begin{eqnarray}
G&=&G_{i}G_{j}\nonumber\\
G_{i}\cap G_{j}&=&Z(G)\nonumber
\end{eqnarray}
\end{proposition}
\paragraph{A) First sub-case: $Z(G)\cong\mathbb{Z}_{2}\times\mathbb{Z}_{2}$}

Let $Z(G)=\left\{e,\alpha,\beta,\gamma\right\}$ with $\alpha^2=\beta^2=\gamma^2=e$.\\
The Abelian groups of order $8$ which have $\mathbb{Z}_{2}\times\mathbb{Z}_{2}$ for subgroup are:
\begin{itemize}
\item $\mathbb{Z}_{4}\times\mathbb{Z}_{2}=\left\{(0,0),(1,0),(2,0),(3,0),(0,1),(1,1),(2,1),(3,1)\right\}$ with:
\begin{itemize}
\item $\left\{(0,0),(2,0),(0,1),(2,1)\right\}\cong\mathbb{Z}_{2}\times\mathbb{Z}_{2}$.
\end{itemize}
\item $(\mathbb{Z}_2)^3=\left\{(0,0,0),(0,0,1),(0,1,0),(0,1,1),(1,0,0),(1,0,1),(1,1,0),(1,1,1)\right\}$ with:
\begin{itemize}
\item $\left\{(0,0,0),(0,0,1),(0,1,0),(0,1,1)\right\}\cong\mathbb{Z}_{2}\times\mathbb{Z}_{2}$.
\item $\left\{(0,0,0),(0,0,1),(1,0,0),(1,0,1)\right\}\cong\mathbb{Z}_{2}\times\mathbb{Z}_{2}$.
\item $\left\{(0,0,0),(0,0,1),(1,1,0),(1,1,1)\right\}\cong\mathbb{Z}_{2}\times\mathbb{Z}_{2}$.
\item $\left\{(0,0,0),(0,1,0),(1,0,0),(1,1,0)\right\}\cong\mathbb{Z}_{2}\times\mathbb{Z}_{2}$.
\item $\left\{(0,0,0),(0,1,0),(1,0,1),(1,1,1)\right\}\cong\mathbb{Z}_{2}\times\mathbb{Z}_{2}$.
\item $\left\{(0,0,0),(0,1,1),(1,0,0),(1,1,1)\right\}\cong\mathbb{Z}_{2}\times\mathbb{Z}_{2}$.
\item $\left\{(0,0,0),(0,1,1),(1,0,1),(1,1,0)\right\}\cong\mathbb{Z}_{2}\times\mathbb{Z}_{2}$.
\end{itemize}
\end{itemize}
\subparagraph{A1) $G_{1},G_{2},G_{3}\cong (\mathbb{Z}_2)^3$:} There exists three elements $a,b,c$ which are not in the center $Z(G)$ such that:
\begin{itemize}
\item $G_{1}=\left\{e,\alpha,\beta,\gamma,a,a\alpha,a\beta,a\gamma\right\}$.
\item $G_{2}=\left\{e,\alpha,\beta,\gamma,b,b\alpha,b\beta,b\gamma\right\}$.
\item $G_{3}=\left\{e,\alpha,\beta,\gamma,c,c\alpha,c\beta,c\gamma\right\}$.
\end{itemize}
$ba$ has order $2$, so $baba=e$. But, $a$ and $b$ have order $2$, so $baab=e$. Consequently $ba=ab$, which leads to a contradiction.
\subparagraph{A2) $G_{1}\cong\mathbb{Z}_{4}\times\mathbb{Z}_{2},G_{2},G_{3}\cong (\mathbb{Z}_2)^3$:}
There exists three elements $a,b,c$ which are not in the center $Z(G)$ such that:
\begin{itemize}
\item $G_{1}=\left\{e,a,a^2=\alpha,a^3,\beta,a\beta,a^2\beta=\gamma,a^3\beta\right\}$.
\item $G_{2}=\left\{e,\alpha,\beta,\gamma,b,b\alpha,b\beta,b\gamma\right\}$.
\item $G_{3}=\left\{e,\alpha,\beta,\gamma,c,c\alpha,c\beta,c\gamma\right\}$.
\end{itemize}
We show easily that $ba=a^3b$ ($ba$ has order $2$, $a$ has order $4$ and $b$ has order $2$). As $G=G_{1}G_{2}$, $G$ is generated by $a,b,\beta$. We have $|<a>|\times |<b>|=8$ and $<a>\cap <b>=\left\{e\right\}$. Consequently $<a,b>\cong D_4$. Moreover, $\beta a=a\beta$ and $\beta b=b\beta$, as $\beta\in Z(G)$. $|<a,b>|\times |<\beta>|=16=|G|$ and $<a,b>\cap<\beta>=\left\{e\right\}$. Consequently, the group $G$ is:
\begin{equation}
G\cong D_4\times\mathbb{Z}_2\nonumber
\end{equation}
Its character presentation is $x^4=y^2=z^2$, $yx=x^3y$, $zx=xz$, $zy=yz$. (In order to avoid confusion, we will prefer character presentation with letter $x,y,z,w,...$ rather than $a,b,c,...$)
\subparagraph{A3) $G_{1},G_{2}\cong\mathbb{Z}_{4}\times\mathbb{Z}_{2},G_{3}\cong (\mathbb{Z}_2)^3$:} We consider two cases:\\
\textbf{First case:}\\ 
There exists three elements $a,b,c$ which are not in the center $Z(G)$ such that:
\begin{itemize}
\item $G_{1}=\left\{e,a,a^2=\alpha,a^3,\beta,a\beta,a^2\beta=\gamma,a^3\beta\right\}$.
\item $G_{2}=\left\{e,b,b^2=\alpha,b^3,\beta,b\beta,b^2\beta=\gamma,b^3\beta\right\}$.
\item $G_{3}=\left\{e,\alpha,\beta,\gamma,c,c\alpha,c\beta,c\gamma\right\}$.
\end{itemize}
Then we show by considering the order of elements that $ba=a^3b^3$. Consequently, $ba=a^3b^3=a^2abb^2=\alpha ab\alpha=\alpha^2ab=ab$, which leads to a contradiction.\\
\textbf{Second case:}\\ 
There exists three elements $a,b,c$ which are not in the center $Z(G)$ such that:
\begin{itemize}
\item $G_{1}=\left\{e,a,a^2=\alpha,a^3,\beta,a\beta,a^2\beta=\gamma,a^3\beta\right\}$.
\item $G_{2}=\left\{e,b,b^2=\beta,b^3,\alpha,b\alpha,b^2\alpha=\gamma,b^3\alpha\right\}=\left\{e,b,b^2=\beta,b^3,\gamma,b\gamma,b^2\gamma=\alpha,b^3\gamma=b\alpha\right\}$.
\item $G_{3}=\left\{e,\alpha,\beta,\gamma,c,c\alpha,c\beta,c\gamma\right\}$.
\end{itemize}
We show that $ba=a^3b^3=\alpha ab\beta=\gamma ab$. Without loss of generality, one can suppose that $ab=c$, so $ba=\gamma c$. We have $b\gamma=\gamma b$ and $bc=bab=\gamma cb$, $<\gamma,c>\cong\mathbb{Z}_2\times\mathbb{Z}_2$, $<b>\cong\mathbb{Z}_4$, $|<\gamma,c>|\times |<b>|=4\times 4=16=|G|$ and $<\gamma,c>\cap <b>=\left\{e\right\}$. Consequently, $G$ is the semi-direct product:
\begin{equation}
G\cong \left(\mathbb{Z}_2\times\mathbb{Z}_2\right)\rtimes_{\varphi}\mathbb{Z}_4,\nonumber
\end{equation}
with:
\begin{eqnarray}
\varphi:\mathbb{Z}_4&\rightarrow&\textrm{Aut}\left(\mathbb{Z}_2\times\mathbb{Z}_2\right)\nonumber\\
1&\rightarrow&\left(\begin{array}{c}
(1,0)\rightarrow(1,0)\\
(0,1)\rightarrow(1,1)
\end{array}\right)\nonumber
\end{eqnarray}
Its minimal character presentation is $x^4=y^4=e$, $yx=x^3y^3$, $x^2,y^2\in Z(G)$. (another possible presentation can be $x^2=y^2=z^4=e$, $yx=xy$, $zx=xyz$, $zy=yz$ but needs more generators).
\subparagraph{A4) $G_{1},G_{2},G_{3}\cong\mathbb{Z}_{4}\times\mathbb{Z}_{2}$:} We consider two cases:\\
\textbf{First case:}\\ 
There exists three elements $a,b,c$ which are not in the center $Z(G)$ such that:
\begin{itemize}
\item $G_{1}=\left\{e,a,a^2=\alpha,a^3,\beta,a\beta,a^2\beta=\gamma,a^3\beta\right\}$.
\item $G_{2}=\left\{e,b,b^2=\beta,b^3,\alpha,b\alpha,b^2\alpha=\gamma,b^3\alpha\right\}$.
\item $G_{3}=\left\{e,c,c^2,c^3,\ldots\right\}$ (the structure of $G_3$ will be precised latter).
\end{itemize}
As $ba$ is not in the center but in $G_3$ then $ba$ has order $4$. In all case, $(ba)^2=c^2$. Indeed, in $\mathbb{Z}_{4}\times\mathbb{Z}_{2}$, the square of all elements of order $4$ is $(2,0)$. We show that $(ba)^2ba=a^3b^3=a^2abb^2=\alpha ab\beta=\gamma ab$. Consequently, $(ba)^2\neq\gamma$. Without loss of generality, one can suppose that $c^2=(ba)^2=\beta$; which gives the entire structure of $G_3$. Consequently, $ba=\beta\gamma ab=\alpha ab=a^2ab=a^3b$. We have $<a>\cong\mathbb{Z}_4$ (idem for $<b>$), $|<a>|\times |<b>|=16=|G|$ and $<a>\cap <b>=\left\{e\right\}$. Consequently, $G$ is the semi-direct product:
\begin{equation}
G\cong\mathbb{Z}_4\rtimes\mathbb{Z}_4\nonumber
\end{equation}
Its character presentation is $x^4=y^4=e$, $yx=x^3y$.\\
\textbf{Second case:}\\
From the first case, one can see that amongst the square $a^2,b^2,c^2$, at least two are equal. The second case is when $a^2=b^2=c^2$. Without loss of generality:
\begin{itemize}
\item $G_{1}=\left\{e,a,a^2=\alpha,a^3,\beta,a\beta,a^2\beta=\gamma,a^3\beta\right\}$.
\item $G_{2}=\left\{e,b,b^2=\alpha,b^3,\beta,b\beta,b^2\beta=\gamma,b^3\beta\right\}$.
\item $G_{3}=\left\{e,c,c^2=\alpha,c^3,\beta,c\beta,c^2\beta=\gamma,c^3\beta\right\}$
\end{itemize}
We show that $(ba)^3=a^3b^3=a^2abb^2=ab$, so $(ba)^2ba=ab$, so $ba=a^3b$. As the character presentation of $<a,b>$ is $a^4=b^4=e$, $a^2=b^2$, $ba=a^3b$, we deduce that $<a,b>\cong\mathbb{H}$. Moreover, $G=G_1G_2=<a,b,\beta>$. As $\beta\in Z(G)$, $|<\beta>|\times |<a,b>|=2\times 8=16$ and $<\beta>\cap<a,b>=\left\{e\right\}$, we deduce that $G$ is the direct product:
\begin{equation}
\mathbb{H}\times\mathbb{Z}_2\nonumber
\end{equation}
Its character presentation is $x^4=y^4=z^2=e$, $x^2=y^2$, $yx=x^3y$, $zx=xz$, $zy=yz$.

We have classified all groups of order $16$ such that $Z(G)\cong\mathbb{Z}_2\times\mathbb{Z}_2$:
\begin{center}
\begin{tabular}{|c|c|}
\hline
\textbf{Name}&\textbf{Character presentation}\\
\hline
$D_{4}\times\mathbb{Z}_{2}$ & $x^4=y^2=z^2=e$,$yx=x^3y$,$zx=xz$,$zy=yz$\\
\hline
$\left(\mathbb{Z}_{2}\times\mathbb{Z}_{2}\right)\rtimes\mathbb{Z}_{4}$& $x^4=y^4=e$, $yx=x^3y^3$, $x^2,y^2\in Z(G)$\\
\hline
$\mathbb{Z}_{4}\rtimes\mathbb{Z}_{4}$ & $x^4=y^4=e$, $yx=x^3y$\\
\hline
$\mathbb{H}\times\mathbb{Z}_{2}$ & $x^4=y^4=z^2=e$, $x^2=y^2$, $yx=x^3y$, $zx=xz$, $zy=yz$\\
\hline
\end{tabular}\\
\textbf{Groups of order 16 with $Z(G)\cong\mathbb{Z}_2\times\mathbb{Z}_{2}$}
\end{center}
\paragraph{B) Second sub-case:} $Z(G)\cong\mathbb{Z}_{4}$.\\
Let $Z(G)=\left\{e,\alpha,\alpha^2,\alpha^3\right\}$.\\
The Abelian groups of order $8$ which have $\mathbb{Z}_{4}$ for subgroups are:
\begin{itemize}
\item $\mathbb{Z}_{8}=\left\{0,1,2,3,4,5,6,7\right\}$ with:
\begin{itemize}
\item $\left\{0,2,4,6\right\}\cong\mathbb{Z}_{4}$
\end{itemize}
\item $\mathbb{Z}_{4}\times\mathbb{Z}_{2}=\left\{(0,0),(1,0),(2,0),(3,0),(0,1),(1,1),(2,1),(3,1)\right\}$ with:
\begin{itemize}
\item $\left\{(0,0),(1,0),(2,0),(3,0)\right\}\cong\mathbb{Z}_{4}$
\item $\left\{(0,0),(1,1),(2,0),(3,1)\right\}\cong\mathbb{Z}_{4}$
\end{itemize}
\end{itemize}
We remark that the square of any element of $\mathbb{Z}_8$ is in the subgroup of order $4$ and the square of any element of $\mathbb{Z}_{4}\times\mathbb{Z}_{2}$ is equal to $(0,0)$ or $(2,0)$ and is in its two subgroups of order $4$. We deduce that the square of any element of $G_{i}$ is in the center. From this fact, we deduce the following proposition.
\begin{proposition}
For $i\neq j$, the commutator $[g_{i},g_{j}]=g_{i}g_{j}g_{i}^{-1}g_{j}^{-1}$ between two elements respectively from $G_i\backslash Z(G)$ and $G_j\backslash Z(G)$ has an order equal to $2$.
\end{proposition}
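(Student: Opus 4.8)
The plan is to locate the commutator inside the center and then pin down its order by elimination. First I would record that, since $G/Z(G)\cong\mathbb{Z}_2\times\mathbb{Z}_2$ is Abelian, the commutator subgroup satisfies $D(G)\subseteq Z(G)$, as already observed in this sub-case. Hence the element $c:=[g_i,g_j]=g_ig_jg_i^{-1}g_j^{-1}$ is a \emph{central} element lying in $Z(G)\cong\mathbb{Z}_4$, so its order divides $4$ and is therefore $1$, $2$ or $4$; the whole task reduces to excluding $1$ and $4$.

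Excluding order $1$ is immediate: $c=e$ would mean $g_ig_j=g_jg_i$, which directly contradicts Proposition 11, asserting that elements of $G_i\backslash Z(G)$ and $G_j\backslash Z(G)$ never commute.

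The heart of the argument is excluding order $4$, and here the key is precisely the fact established just before the proposition, namely that the square of any element of $G_i$ lies in the center. Rewriting the commutator relation as $g_jg_ig_j^{-1}=c^{-1}g_i$ (which follows at once from $c=g_ig_jg_i^{-1}g_j^{-1}$), I would conjugate $g_i^2$ by $g_j$ in two different ways. On one hand $g_i^2\in Z(G)$ gives $g_jg_i^2g_j^{-1}=g_i^2$; on the other hand, using that $c$ is central,
\[
g_jg_i^2g_j^{-1}=(g_jg_ig_j^{-1})^2=(c^{-1}g_i)^2=c^{-2}g_i^2.
\]
Comparing the two expressions forces $c^{-2}=e$, i.e.\ $c^2=e$. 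Combined with $c\neq e$ from the previous paragraph, this shows that $c$ has order exactly $2$.

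I expect the only delicate point to be the order-$4$ elimination; everything hinges on exploiting \emph{both} the centrality of $c$ (coming from $D(G)\subseteq Z(G)$) and the centrality of $g_i^2$ (coming from the paragraph preceding the proposition) at the same time. Once both facts are in hand the computation collapses to the single conjugation displayed above, and no appeal to the specific isomorphism type of the $G_i$ is needed beyond this square-central property.
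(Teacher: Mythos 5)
Your proof is correct and takes essentially the same route as the paper: both arguments combine the centrality of $g_i^2$ with the centrality of $[g_i,g_j]$ (from $D(G)\subseteq Z(G)$) to force $[g_i,g_j]^2=e$, your ``conjugate $g_i^2$ by $g_j$ two ways'' computation being precisely the paper's identity $g_jg_i^2=[g_i,g_j]^2g_jg_i^2$. If anything, you are slightly more careful than the paper, since you explicitly rule out order $1$ by invoking the earlier non-commuting proposition, a step the paper leaves implicit in ``we deduce the conclusion.''
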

\begin{proof}
As $g_i^2\in Z(G)$ and using $D(G)<Z(G)$, we deduce $g_jg_i^2=g_i^2g_j=g_ig_ig_j=g_i[g_i,g_j]g_jg_i=[g_i,g_j]g_ig_jg_i=[g_i,g_j]^2g_jg_i^2$ and we deduce the conclusion.
\end{proof}
\subparagraph{B1) $G_1,G_2,G_3\cong\mathbb{Z}_{4}\times\mathbb{Z}_{2}$:} By considering how $\mathbb{Z}_{4}\times\mathbb{Z}_{2}$ is built from one of its subgroup of order $4$ and an element of order $2$ which is not in the considered subgroup, we deduce that there exists $a,b,c$ not in the center and of order $2$ such that:
\begin{itemize}
\item $G_{1}=\left\{e,\alpha,\alpha^2,\alpha^3,a,a\alpha,a\alpha^2,a\alpha^3\right\}$ with $|a\alpha|=4$, $|a\alpha^2|=2$ and $|a\alpha^3|=4$.
\item $G_{2}=\left\{e,\alpha,\alpha^2,\alpha^3,b,b\alpha,b\alpha^2,b\alpha^3\right\}$ with $|b\alpha|=4$, $|b\alpha^2|=2$ and $|b\alpha^3|=4$.
\item $G_{3}=\left\{e,\alpha,\alpha^2,\alpha^3,c,c\alpha,c\alpha^2,c\alpha^3\right\}$ with $|c\alpha|=4$, $|c\alpha^2|=2$ and $|c\alpha^3|=4$.
\end{itemize}
The order of $ba$ is either $2$ or $4$.\\
Suppose that $|ba|=2$. Then $baba=e$ and as $a$ and $b$ have order $2$, then $baab=e$. We deduce $ba=ab$, which leads to a contradiction.\\
Suppose that $|ba|=4$. Then $ba(ba)^3=e$ and $baab=e$. Consequently, $(ba)^3=ab$. From the structure of $\mathbb{Z}_{4}\times\mathbb{Z}_{2}$, we show easily that $(ba)^2=\alpha^2$, so $ba=\alpha^2ab$. As $<\alpha,a>=G_{1}\cong\mathbb{Z}_{4}\times\mathbb{Z}_{2}$, $<b>\cong\mathbb{Z}_{2}$, $|<\alpha,a>|\times |<b>|=8\times 2=16$ and $<\alpha,a>\cap <b>=\left\{e\right\}$; $G$ is the semi-direct product:
\begin{equation}
G\cong\left(\mathbb{Z}_{4}\times\mathbb{Z}_{2}\right)\rtimes_{\varphi}\mathbb{Z}_{2}\nonumber
\end{equation}
with:
\begin{eqnarray}
\varphi:\mathbb{Z}_2&\rightarrow&\textrm{Aut}\left(\mathbb{Z}_4\times\mathbb{Z}_2\right)\nonumber\\
1&\rightarrow&\left(\begin{array}{c}
(1,0)\rightarrow(1,0)\\
(0,1)\rightarrow(2,1)
\end{array}\right)\nonumber
\end{eqnarray}
Its character presentation is $x^4=y^2=z^2=e$, $xy=yx$, $zx=xz$, $zy=x^2yz$.
\subparagraph{B2) $G_1\cong\mathbb{Z}_{8},G_2,G_3\cong\mathbb{Z}_{4}\times\mathbb{Z}_{2}$:}
There exists an element $a$ not in the center and of order $8$ and two elements $b,c$ not in the center of order $2$ such that:
\begin{itemize}
\item $G_{1}=\left\{e,a,a^2=\alpha,a^3,a^4=\alpha^2,\ldots\right\}$.
\item $G_{2}=\left\{e,\alpha,\alpha^2,\alpha^3,b,b\alpha,b\alpha^2,b\alpha^3\right\}$ with $|b\alpha|=4$, $|b\alpha^2|=2$ and $|b\alpha^3|=4$.
\item $G_{3}=\left\{e,\alpha,\alpha^2,\alpha^3,c,c\alpha,c\alpha^2,c\alpha^3\right\}$ with $|c\alpha|=4$, $|c\alpha^2|=2$ and $|c\alpha^3|=4$.
\end{itemize}
Let us consider that $|ba|=2$, then $baba=e$ and $baa^7b=e$. Consequently, $ba=a^7b=a^6ab=\alpha^3ab$. But $\alpha^3$, which is the commutator $[b,a]$ is of order $4$; which is contradictory.\\
Consider $|ba|=4$, then we show that $(ba)^2ba=a^7b=\alpha^3ab$. As $(ba)^2=\alpha^2$; then $ba=\alpha ab$; which is also contradictory.
\subparagraph{B3) $G_1,G_2\cong\mathbb{Z}_{8},G_3\cong\mathbb{Z}_{4}\times\mathbb{Z}_{2}$:}
There exists two elements $a,b$ which are not in the center and of order $8$ and an element $c$ not in the center of order $2$ such that:
\begin{itemize}
\item $G_{1}=\left\{e,a,a^2=\alpha,a^3,a^4=\alpha^2,\ldots\right\}$.
\item $G_{2}=\left\{e,b,b^2=\alpha,b^3,b^4=\alpha^2,\ldots\right\}$.
\item $G_{3}=\left\{e,\alpha,\alpha^2,\alpha^3,c,c\alpha,c\alpha^2,c\alpha^3\right\}$ with $|c\alpha|=4$, $|c\alpha^2|=2$ and $|c\alpha^3|=4$.
\end{itemize}
If $|ba|=2$, then $ba=a^7b^7=a^6abb^6=\alpha^3ab\alpha^3=\alpha^2ab$. We have also $|ab|=2$. Without loss of generality, one can suppose $ab=c$. We deduce $ca=aba=a\alpha^2ab=a\alpha^2c=a^5c$. $<a>\cong\mathbb{Z}_{8}$, $<c>\cong\mathbb{Z}_{2}$, $|<a>|\times |<c>|=8\times2=16$ and $<a>\cap <c>=\left\{e\right\}$. Consequently, $G$ is the semi-direct product:
\begin{equation}
G\cong\mathbb{Z}_{8}\rtimes_{\varphi_1}\mathbb{Z}_{2},\nonumber
\end{equation}
where:
\begin{eqnarray}
\varphi_{1}:\mathbb{Z}_{2}&\rightarrow&\textrm{Aut}\left(\mathbb{Z}_{8}\right)\nonumber\\
1&\rightarrow&(\begin{array}{c}
1\rightarrow 5
\end{array})\nonumber
\end{eqnarray}
Its character presentation is $x^8=y^2=e$, $yx=x^5y$.\\
If $|ba|=4$, then $(ba)^3=\alpha^2ab$. As $(ba)^2=\alpha^2$, then $ba=ab$, which is contradictory.
\subparagraph{B4) $G_1,G_2,G_3\cong\mathbb{Z}_{8}$:}
Then, there exists $a,b$ not in the center of order $8$ and such that $ba$ is also of order $8$. We deduce $(ba)^7=a^7b^7=\alpha^3ab\alpha^3=\alpha^2ab$. As $(ba)^6=\alpha^2$ then $ba=ab$; which leads to a contradiction.

We have classified all groups of order $16$ such that $Z(G)\cong\mathbb{Z}_4$:
\begin{center}
\begin{tabular}{|c|c|}
\hline
\textbf{Name}&\textbf{Character presentation}\\
\hline
$\left(\mathbb{Z}_4\times\mathbb{Z}_2\right)\rtimes\mathbb{Z}_2$&$x^4=y^2=z^2=e$,$xy=yx$,$zx=xz$,$zy=x^2yz$\\
\hline
$\mathbb{Z}_8\rtimes_{\varphi_1}\mathbb{Z}_2$&$x^8=y^2=e$,$yx=x^5y$\\
\hline
\end{tabular}\\
\textbf{Groups of order 16 with $Z(G)\cong\mathbb{Z}_4$}
\end{center}
\subsubsection{Second case: $|Z(G)|=2$}
This case is more difficult as $G/Z(G)$ is not necessarily Abelian.\\
In the following, we will denote $Z(G)=\left\{e,z\right\}$.\\
We have to study the different cases for $G/Z(G)$. $G/Z(G)$ is not cyclic. The different cases are presented below.
\paragraph{A) $G/Z(G)\cong\mathbb{Z}_4\times\mathbb{Z}_2$:} $ $\\The maximal subgroups of $\mathbb{Z}_4\times\mathbb{Z}_2=\left\{(0,0),(1,0),(2,0),(3,0),(0,1),(1,1),(2,1),(3,1)\right\}$ are:
\begin{itemize}
\item $\left\{(0,0),(1,0),(2,0),(3,0)\right\}\cong \mathbb{Z}_4$ and has only one subgroup of order $2$.
\item $\left\{(0,0),(1,1),(2,0),(3,1)\right\}\cong \mathbb{Z}_4$ and has only one subgroup of order $2$.
\item $\left\{(0,0),(0,1),(2,0),(2,1)\right\}\cong \mathbb{Z}_2\times \mathbb{Z}_2$ and has three subgroups of order $2$.
\end{itemize}
Consequently, $G$ has $3$ subgroups $G_{1},G_{2},G_{3}$ of order $8$ containing the center such that $G_{1},G_{2}$ has only one subgroup of order $4$ containing the center and $G_{3}$ has three subgroups of order $4$ containing the center. We show that $G_{1}$ (resp. $G_{2}$) is Abelian. Indeed, $Z(G)<Z(G_{1})$. If $G_{1}$ were not Abelian, then it has three subgroups of order $4$ containing $Z(G_1)$ and consequently containing $Z(G)$; which is contradictory.\\
The following proposition conducts to a contradiction:
\begin{proposition}
If $G$ has at least two Abelian subgroups of order $8$ then $|Z(G)|\geq 4$.
\end{proposition}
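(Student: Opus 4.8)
The plan is to show that the intersection $G_1\cap G_2$ of the two Abelian subgroups has order $4$ and is contained in the center; this immediately yields $|Z(G)|\geq 4$ (and hence the desired contradiction with $|Z(G)|=2$).

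First I would pin down the order of the intersection by a counting argument using the product formula from the last proposition of the Preliminaries. Since $G_1$ and $G_2$ are distinct subgroups of order $8$ and $G_1G_2\subseteq G$ has at most $16$ elements, the identity $|G_1G_2|=\frac{|G_1|\,|G_2|}{|G_1\cap G_2|}=\frac{64}{|G_1\cap G_2|}$ forces $|G_1\cap G_2|\geq 4$. On the other hand $G_1\cap G_2$ is a proper subgroup of $G_1$ (because $G_1\neq G_2$), so by Lagrange its order divides $8$ and is strictly less than $8$; combined with the previous inequality this gives $|G_1\cap G_2|=4$. As a bonus, $|G_1G_2|=16$, so $G=G_1G_2$, a fact I will use in the next step.

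Next I would prove that $H:=G_1\cap G_2$ lies in $Z(G)$. The key observation is that $H$ sits inside both Abelian subgroups simultaneously: each $h\in H$ commutes with every element of $G_1$ (since $H\subseteq G_1$ and $G_1$ is Abelian) and likewise with every element of $G_2$. Since $G=G_1G_2$, an arbitrary $g\in G$ factors as $g=g_1g_2$ with $g_i\in G_i$, and then $hg=hg_1g_2=g_1hg_2=g_1g_2h=gh$. Thus $h$ commutes with all of $G$, i.e. $h\in Z(G)$, so $H\subseteq Z(G)$ and $|Z(G)|\geq |H|=4$.

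I do not anticipate a genuine obstacle: the argument is purely counting plus the commutativity hypotheses. The one place that demands a little care is the counting step that pins $|G_1\cap G_2|$ to exactly $4$, and the accompanying remark $G=G_1G_2$, since it is precisely the decomposition $g=g_1g_2$ that makes the centrality argument work.
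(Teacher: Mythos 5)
Your proof is correct and follows essentially the same route as the paper: the product formula $|G_1G_2|=\frac{64}{|G_1\cap G_2|}$ forces $G=G_1G_2$ and $|G_1\cap G_2|=4$, and then the factorization $g=g_1g_2$ shows that every element of $G_1\cap G_2$ is central. You simply spell out the counting step (and the properness of the intersection) in more detail than the paper does.
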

\begin{proof}
Using:
\begin{equation}
|G_1G_2|=\frac{|G_1||G_2|}{|G_1\cap G_2|}=\frac{64}{|G_1\cap G_2|},\nonumber
\end{equation}
we deduce that $G=G_1G_2$ and $|G_1\cap G_2|=4$.\\
If $G_1$ and $G_2$ are Abelian, then taking $g\in G_1\cap G_2$ and $g_{1}g_{2}\in G$. $gg_{1}g_{2}=g_{1}gg_{2}=g_{1}g_{2}g$. Consequently, $G_1\cap G_2\subset Z(G)$. We deduce the result.
\end{proof}
\paragraph{B) $G/Z(G)\cong(\mathbb{Z}_2)^3$:}
In this case, for any $g\in G$, $g^2\in Z(G)$. As $G$ is not Abelian, then there exists an element $g$ such that $g^2=z$, so $|g|=4$. The group $G$ has no element of order $8$ nor $16$. Let $<g>=\left\{e,g,g^2=z,g^3\right\}\cong\mathbb{Z}_4$. It is a subgroup of $G$ containing $Z(G)$. Consequently, by the correspondance theorem, $<g>/Z(G)$ is a subgroup of order $2$ of $G/Z(G)$. Without loss of generality, one can suppose that this subgroup is identified to the subgroup $\left\{(0,0,0),(0,0,1)\right\}$ of $(\mathbb{Z}_2)^3$. This subgroup is contained to the following three subgroups of order 4:
\begin{itemize}
\item $\left\{(0,0,0),(0,0,1),(0,1,0),(0,1,1)\right\}$.
\item $\left\{(0,0,0),(0,0,1),(1,0,0),(1,0,1)\right\}$.
\item $\left\{(0,0,0),(0,0,1),(1,1,0),(1,1,1)\right\}$.
\end{itemize}
Consequently, $G$ has three subgroups $G_{1},G_{2},G_{3}$ of order $8$ containing $<g>$. One can easily show, using the correspondance theorem, that for $i\neq j$ and for $g_{i}\in G_{i}\backslash <g>$, $g_{j}\in G_{j}\backslash <g>$ then $g_{i}g_{j}\in G_{k}\backslash <g> $ for $k\notin\left\{i,j\right\}$. From the proposition 14, the maximum number of Abelian subgroups of order $8$ is $1$. Consequently, one can suppose that $G_{2}$ and $G_{3}$ are non-Abelian. Consequently, one can take $g_{2}\in G_{2}\backslash <g>$ (resp. $g_{3}\in G_{3}\backslash <g>$) which does not commute with $g$. As $G/Z(G)$ is Abelian, then $D(G)=Z(G)$. Consequently, $gg_{2}=zg_{2}g$ and $gg_{3}=zg_{3}g$. We deduce $gg_{2}g_{3}=zg_{2}gg_{3}=zg_{2}zg_{3}g=z^2g_{2}g_{3}g=g_{2}g_{3}g$. Consequently, $g$ commutes with all elements of $G_{1}\backslash<g>$; and as a consequence, $g\in Z(G_{1})$. $Z(G_{1})$ contains $\left\{e,z,g\right\}$. We deduce that $G_1$ is Abelian. As $G$ does not contains element of order $8$ and as $g\in G_1$ is of order $4$ then $G_1\cong\mathbb{Z}_{4}\times\mathbb{Z}_{2}$. Let us denote:\\
$G_1=\left\{e,g,g^2=z,g^3,g_1,gg_1,g^2g_1,g^3g_1\right\}$. Let $g_2\in G_2\backslash <g>$ (corresponds to a symmetry in $D_4$ or one of the four other elements of order $4$ in $\mathbb{H}$). If one of the $4$ elements of $G_1\backslash <g>$ commutes with one of the elements of $G_2\backslash <g>$, then it commutes with all elements of $G_{2}$ and the center of $G_2$ has at least $3$ elements, which is contradictory. Consequently; none element of $G_1\backslash<g>$ commutes with one element of $G_2\backslash<g>$.\\
Let $g_{1}\in G_1\backslash<g>$ and $g_{2}\in G_2\backslash<g>$, then $g_1g_2=zg_2g_1$. We show that $gg_1g_2=gzg_2g_1=zgg_2g_1=z^2g_2gg_1=g_2gg_1$. Consequently, $g_2$ commutes with $gg_1$; it leads to a contradiction.
\paragraph{C) $G/Z(G)\cong\mathbb{H}$:} $ $\\
The maximal subgroups of $\mathbb{H}=\left\{1,-1,i,-i,j,-j,k,-k\right\}$ are:
\begin{itemize}
\item $\left\{1,i,-1,-i\right\}\cong\mathbb{Z}_4$, which has only one subgroup of order $2$.
\item $\left\{1,j,-1,-j\right\}\cong\mathbb{Z}_4$, which has only one subgroup of order $2$.
\item $\left\{1,k,-1,-k\right\}\cong\mathbb{Z}_4$, which has only one subgroup of order $2$.
\end{itemize}
Consequently, $G$ has three Abelian subgroups of order $8$. Using the proposition 14, it leads to a contradiction.
\paragraph{D) $G/Z(G)\cong D_4$:} $ $\\
The maximal subgroups of $D_4=\left\{Id,r,r^2,r^3,s,rs,r^2s,r^3s\right\}$ are:
\begin{itemize}
\item $\left\{Id,r,r^2,r^3\right\}\cong \mathbb{Z}_4$ which has only one subgroup of order $2$.
\item $\left\{Id,s,r^2,r^2s\right\}\cong \mathbb{Z}_2\times \mathbb{Z}_2$ which has three subgroups of order $2$.
\item $\left\{Id,rs,r^2,r^3s\right\}\cong \mathbb{Z}_2\times \mathbb{Z}_2$ which has three subgroups of order $2$.
\end{itemize}
Consequently, $G$ has three subgroups of order $8$, $G_1,G_2,G_3$ which contain the center such that $G_1$ has only one subgroup of order $4$ which contains the center and $G_2,G_3$ have three subgroups of order $4$ which contain the center. $G_{1}$ is Abelian and $G_{2},G_{3}$ are necessarily non-Abelian. We show easily $Z(G)=Z(G_2)=Z(G_3)$ as the center of a non-Abelian group of order $8$ has two elements. If $G_2\cong D_4$, as $r^2=rsr^3s$, then $r^2\in D(G)$. If $G_2\cong\mathbb{H}$, as $-1=ij(-i)(-j)$, then $-1\in D(G)$. In all cases, $Z(G)<D(G)$ and the inclusion is strict as $G/Z(G)$ is non-Abelian. As, in $D_4$, $s$ and $rs$ do not commute; then there exists $g_{2}\in G_{2}$ and $g_{3}\in G_3$ such that $g_2Z(G)$ (identified to $s$) and $g_3Z(G)$ (identified to $rs$) do not commute. Consequently, there exists $g'\in G$ (such $g'Z(G)$ is identified to $r^2$) such $g_2g_3Z(G)=g_3g_2g'Z(G)$. We deduce that there exists $z_0\in Z(G)$ such $g_2g_3=g_3g_2g'z_0$. Consequently, $g'z_0\in D(G)$ and as $z_0\in D(G)$, we deduce that $g'\in D(G)$.\\
We have $\left\{e,g',z,g'z\right\}\subset D(G)$ and forms a group. Let us show that this group is a normal subgroup of $G$. We have $hg'zh^{-1}Z(G)=hg'h^{-1}Z(G)=g'Z(G)$ as $g'Z(G)\in Z\left(G/Z(G)\right)$. Consequently $hg'h^{-1},hg'zh^{-1}\in\left\{g',g'z\right\}$; we deduce that $\left\{e,g',z,g'z\right\}$ is a normal subgroup of $G$. As $G/\left\{e,g',z,g'z\right\}$ is of order $4$, it is necessarily Abelian.\\
Consequently, $D(G)=\left\{e,g',z,g'z\right\}$. Moreover, $\dps{D(G)/Z(G)=Z\left(G/Z(G)\right)}$ and corresponds to $\left\{Id,r^2\right\}$. We deduce that $D(G)<G_i$ for any $i=1,2,3$. Using the formula $\dps{|G_iG_j=\frac{|G_i||G_j|}{|G_i\cap G_j|}|}$, we deduce $G=G_iG_j$ and $G_i\cap G_j=D(G)$.
\subparagraph{D1) $D(G)\cong\mathbb{Z}_2\times\mathbb{Z}_2$:}$ $\\
$G_1$ cannot be isomorphic to $(\mathbb{Z}_2)^3$, as any subgroup of order $2$ is included in three subgroups of order $4$. The only possibility is $G_1\cong\mathbb{Z}_4\times\mathbb{Z}_2$. The unique subgroup of order $4$ containing the center is then identified to $\left\{(0,0),(2,0),(0,1),(2,1)\right\}$. As the square of any element of $\mathbb{Z}_4\times\mathbb{Z}_2$ is either $(0,0)$ or $(2,0)$ and $(2,0)$ is in three subgroups of order $4$; we deduce that there exists an element $a$ of order $4$ in $G\backslash D(G)$ such that:
\begin{itemize}
\item $G_1=\left\{e,a,a^2=g',a^3,z,az,a^2z=g'z,a^3z\right\}\cong\mathbb{Z}_4\times\mathbb{Z}_2$.
\end{itemize}
As $\mathbb{H}$ has no subgroup isomorphic to $\mathbb{Z}_2\times\mathbb{Z}_2$; we deduce $G_{2},G_{3}\cong D_4$. As $Z(G)=Z(G_2)=Z(G_3)$, then there exists $b,c$ in $G\backslash D(G)$ of order $4$ such that:
\begin{itemize}
\item $G_2=\left\{e,b,b^2=z,b^3,g',bg'=g'b^3,b^2g'=zg',b^3g'=g'b\right\}$.
\item $G_3=\left\{e,c,c^2=z,c^3,g',cg'=g'c^3,c^2g'=zg',c^3g'=g'c\right\}$.
\end{itemize}
By correspondance theorem, we show that $baZ(G)=abg'Z(G)=g'abZ(G)$.\\
Suppose that $ba=g'ab$. If $|ba|=2$, then $ba=a^3b^3=a^2abb^2=g'abz=g'zab$. Consequently, $g'=g'z$, which is contradictory. We have $ab=g'ba$. If we suppose that $|ba|=4$, as $g'$ is a symmetry, then $|ab|=2$ and it conducts to the same contradiction.\\
Suppose that $ba=g'zab$. If $|ba|=4$, then $(ba)^2ba=a^3b^3=g'abz=g'zab$. As $(ba)^2=z$ then $ba=g'ab$ which is contradictory. We have $ab=g'zba$. If $|ba|=2$, as $ba$ is not in $D(G)$, we deduce $|ab|=4$ which leads to the same contradiction.\\
Consequently, it is impossible that $D(G)\cong\mathbb{Z}_2\times\mathbb{Z}_2$.
\subparagraph{D2) $D(G)\cong\mathbb{Z}_4$:}$ $\\
In this case, $D(G)=\left\{e,g',g'^2=z,g'^3\right\}$
We have seen that $G_{1}$ cannot be isomorphic to $\left(\mathbb{Z}_2\right)^3$. However, as $\mathbb{Z}_{4}\times\mathbb{Z}_{2}$ has two groups isomorphic to $\mathbb{Z}_4$ which contains a common subgroup of order $2$; $G_1$ cannot be isomorphic to $\mathbb{Z}_{4}\times\mathbb{Z}_{2}$ (as the center will be included in two subgroups of order $4$). Consequently, $G_1\cong\mathbb{Z}_8$ and there exists $a\in G\backslash D(G)$ of order $8$ such that:
\begin{itemize}
\item $G_1=\left\{e,a,a^2=g',a^3,a^4=z,\ldots\right\}$.
\end{itemize}
\subparagraph{D2-1) $G_2,G_3\cong D_4$:}$ $\\
There exists $b,c$ not in $D(G)$ and of order $2$ such that:
\begin{itemize}
\item $G_{2}=\left\{e,g',g'^2=z,g'^3,b,g'b=bg'^3,g'^2b,g'^3b=bg'\right\}$.
\item $G_{3}=\left\{e,g',g'^2=z,g'^3,c,g'c=cg'^3,g'^2c,g'^3c=cg'\right\}$.
\end{itemize}
By the correspondance theorem and the structure of the maximal subgroups of $D_4$, we deduce that $ba\in G^3\backslash D(G)$. Consequently, $|ba|=2$ and $ba=a^7b$. $<a>=G_1\cong\mathbb{Z}_8$, $<b>\cong\mathbb{Z}_2$, $|<a>|\times|<b>|=8\times 2=16$ and $<a>\cap<b>=\left\{e\right\}$. Consequently, $G$ is the semi-direct product, also called diedral group $D_8$:
\begin{equation}
G\cong\mathbb{Z}_8\rtimes_{\varphi_2}\mathbb{Z}_2,\nonumber
\end{equation}
with:
\begin{eqnarray}
\varphi_2:\mathbb{Z}_2&\rightarrow&\textrm{Aut}\left(\mathbb{Z}_8\right)\nonumber\\
1&\rightarrow&(1\rightarrow 7)\nonumber
\end{eqnarray}
Its character presentation is $x^8=y^2=e$, $yx=x^7y$.
\subparagraph{D2-2) $G_2\cong D_4,G_3\cong\mathbb{H}$:}$ $\\
There exists $b,c$ not in $D(G)$ respectively of order $2$ and $4$ such that:
\begin{itemize}
\item $G_{2}=\left\{e,g',g'^2=z,g'^3,b,g'b=bg'^3,g'^2b,g'^3b=bg'\right\}$.
\item $G_{3}=\left\{e,g',g'^2=z,g'^3,c,g'c,g'^2c,g'^3c\right\}$.
\end{itemize}
$ba\in G_{3}\backslash D(G)$, then $|ba|=4$. Consequently, $(ba)^2ba=a^7b$. As $(ba)^2=g'^2=z=a^4$, then $ba=a^3b$. With the same method as previously, we deduce that $G$ is the semi-direct product (also called semi-diedral group $SD_8$):
\begin{equation}
G\cong\mathbb{Z}_8\rtimes_{\varphi_3}\mathbb{Z}_2,\nonumber
\end{equation}
with:
\begin{eqnarray}
\varphi_3:\mathbb{Z}_2&\rightarrow&\textrm{Aut}\left(\mathbb{Z}_8\right)\nonumber\\
1&\rightarrow&(1\rightarrow 3)\nonumber
\end{eqnarray}
Its character presentation is $x^8=y^2=e$, $yx=x^3y$.
\subparagraph{D2-3) $G_2,G_3\cong\mathbb{H}$:}$ $\\
There exists $b$ and $c$ not in $D(G)$ of order $4$ such that:
\begin{itemize}
\item $G_{2}=\left\{e,g',g'^2=z,g'^3,b,g'b,g'^2b,g'^3b\right\}$.
\item $G_{3}=\left\{e,g',g'^2=z,g'^3,c,g'c,g'^2c,g'^3c\right\}$.
\end{itemize}
$|ba|=4$ then $(ba)^2ba=a^7b^3$. As $(ba)^2=z=a^4$ then $ba=a^3b^3$. It is not a semi-direct product. Its character presentation is $x^8=y^4=e$, $x^4=y^2$, $yx=x^3y^3$.

The following Table gives the groups of order $16$ such that $Z(G)\cong \mathbb{Z}_2$:
\begin{center}
\begin{tabular}{|c|c|}
\hline
\textbf{Name} & \textbf{Character presentation}\\
\hline
$\mathbb{Z}_{8}\rtimes_{\varphi_2} \mathbb{Z}_{2}$ & $x^8=y^2=e$,$yx=x^7y$\\
\hline
$\mathbb{Z}_{8}\rtimes_{\varphi_3} \mathbb{Z}_{2}$ & $x^8=y^2=e$,$yx=x^3y$\\
\hline
Generalized Quaternions & $x^8=y^4=e$, $x^4=y^2$, $yx=x^3y^3$\\
\hline
\end{tabular}
\end{center}
\section{Conclusion}
The groups of order 16 are given in the following table:
\begin{center}
\begin{tabular}{|c|c|c|}
\hline
\textbf{Name} & \textbf{Character presentation}&\textbf{Center}\\
\hline
$\mathbb{Z}_{16}$ & $a^{16}=e$ & $Z(G)=G$\\
\hline
$\mathbb{Z}_{8}\times\mathbb{Z}_{2}$ & $a^{8}=b^{2}=e$,$ba=ab$ & $Z(G)=G$\\
\hline
$\mathbb{Z}_{4}\times\mathbb{Z}_{4}$ & $a^{4}=b^{4}=e$,$ba=ab$ & $Z(G)=G$\\
\hline
$\mathbb{Z}_{4}\times\left(\mathbb{Z}_{2}\right)^2$ & $a^4=b^2=c^2=e$,$D(G)=\left\{e\right\}$ & $Z(G)=G$\\
\hline
$\left(\mathbb{Z}_{2}\right)^4$ & $a^2=b^2=c^2=d^2=e$,$D(G)=\left\{e\right\}$ & $Z(G)=G$\\
\hline
\hline
$D_{4}\times\mathbb{Z}_{2}$ & $x^4=y^2=z^2=e$,$yx=x^3y$,$zx=xz$,$zy=yz$& $\left\{e,x^2,z,zx^2\right\}\cong\mathbb{Z}_{2}\times\mathbb{Z}_{2}$\\
\hline
$\left(\mathbb{Z}_{2}\times\mathbb{Z}_{2}\right)\rtimes\mathbb{Z}_{4}$ & $x^4=y^4=e$,$yx=x^3y^3$&$\left\{e,x^2,y^2,x^2y^2\right\}\cong\mathbb{Z}_{2}\times\mathbb{Z}_{2}$\\
\hline
$\mathbb{Z}_{4}\rtimes\mathbb{Z}_{4}$ & $x^4=y^4=e$,$yx=x^3y$ & $\left\{e,x^2,y^2,x^2y^2\right\}\cong\mathbb{Z}_{2}\times\mathbb{Z}_{2}$\\
\hline
$\mathbb{H}\times\mathbb{Z}_{2}$ & $x^4=y^4=z^2=e$,$x^2=y^2$,$yx=x^3y$,$zx=xz$,$zy=yz$ & $\left\{e,x^2,z,x^2z\right\}\cong\mathbb{Z}_{2}\times\mathbb{Z}_{2}$\\
\hline
\hline
$\left(\mathbb{Z}_{4}\times\mathbb{Z}_{2}\right)\rtimes\mathbb{Z}_{2}$ & $x^4=y^2=z^2=e$,$xy=yx$,$zx=xz$,$zy=x^2yz$& $\left\{e,x,x^2,x^3\right\}\cong\mathbb{Z}_4$\\
\hline
$\mathbb{Z}_{8}\rtimes_{\varphi_{1}}\mathbb{Z}_{2}$& $x^8=y^2=e$,$yx=x^5y$&$\left\{e,x^2,x^4,x^6\right\}\cong\mathbb{Z}_4$\\
\hline
\hline
$\mathbb{Z}_{8}\rtimes_{\varphi_{2}}\mathbb{Z}_{2}$& $x^8=y^2=e$,$yx=x^7y$&$\left\{e,x^4\right\}\cong\mathbb{Z}_{2}$\\
\hline
$\mathbb{Z}_{8}\rtimes_{\varphi_{3}}\mathbb{Z}_{2}$& $x^8=y^2=e$,$yx=x^3y$&$\left\{e,x^4\right\}\cong\mathbb{Z}_{2}$\\
\hline
Gen. Quat. & $x^8=y^4=e$,$x^4=y^2$,$yx=x^3y^3$&$\left\{e,x^4\right\}\cong\mathbb{Z}_{2}$\\
\hline
\end{tabular}
\end{center}

\end{document}